\definecolor{webgreen}{rgb}{0,.5,0}
\definecolor{webbrown}{rgb}{.6,0,0}
\newcommand{\seqnum}[1]{\href{https://oeis.org/#1}{\underline{#1}}}
\begin{document}

\begin{center}
\epsfxsize=4in
\end{center}

\theoremstyle{plain} \newtheorem{theorem}{Theorem}
\newtheorem{corollary}[theorem]{Corollary} \newtheorem{lemma}[theorem]{Lemma}
\newtheorem{proposition}[theorem]{Proposition}
\theoremstyle{definition} \newtheorem{definition}[theorem]{Definition}
\newtheorem{example}[theorem]{Example}
\newtheorem{conjecture}[theorem]{Conjecture}
\theoremstyle{remark} \newtheorem{remark}[theorem]{Remark}

\begin{center}
\vskip 1cm{\LARGE\bf An Identity for Generalized \\
\vskip .1in
Euler Polynomials\\
}
\vskip 1cm
\large
Redha Chellal\\
LATN, Faculty of Mathematics\\
USTHB\\
Algiers\\
Algeria\\
rchellal@usthb.dz\\
chellalredha4@gmail.com

\end{center}

\vskip .2 in

\begin{abstract}
In this paper, we introduce a novel identity for generalized Euler polynomials, leading to further generalizations for several relations involving classical Euler numbers, Euler polynomials, Genocchi polynomials, and Genocchi numbers.
\end{abstract}

\section{Introduction}\label{sec:intro}

Let $\mathbb{N}$ and $\mathbb{C}$ denote, respectively, the set of positive integers and the set of complex numbers. Roman \cite{rom} defined generalized
Euler polynomials $E_{n}^{(\alpha)}(x)$ as
follows: for all $n \in \mathbb{N}$ and $\alpha \in \mathbb{C}$,
\begin{equation}
\sum_{n=0}^{\infty }E_{n}^{(\alpha)}(x)\frac{t^{n}
}{n!}=\bigg(\frac{2}{e^{t}+1}\bigg)^{\alpha}e^{tx}.
\end{equation}

We denote by $D$ the classical derivation operator and by $\Psi _{\alpha}$ the automorphism of the vector space
$\mathbb{C}[x]$ defined for $\alpha \in \mathbb{C}$ by
\begin{equation}
\Psi _{\alpha}(x^{n})=E_{n}^{(\alpha)}(x), \text{\ }n\in \mathbb{N}.
\end{equation}
In this paper, we are mainly interested in evaluating the sum $T_{n,\ell
,r}^{(\alpha)}(x,y,z)$ defined by
\begin{align*}
T_{n,\ell ,r}^{(\alpha)}(x,y,z) &=\sum_{k=0}^{n+r}
\binom{n+r}{k}\binom{\ell +k+r}{r}x^{n+r-k}E_{\ell +k}^{(\alpha
)}(y) \\
&+(-1)^{\ell +n+r+1+s}\sum_{k=0}^{\ell +r}\binom{\ell +r}{k}
\binom{n+k+r}{r}x^{\ell +r-k}E_{n+k}^{(\alpha)}(z).
\end{align*}
For all complex numbers $\alpha$, $\lambda$, and for all $l$, $n$, $r$, and $s$ are non-negative
integers. One establishes a result analogous to \cite[Thm.\ 3, Eq.\ 42, p.\ 8]{che}.

\section{Some properties of the generalized Euler polynomials and lemmas}\label{sec:some}

Let us consider the three following operators defined over any endomorphism of the vector space $\mathbb{C}[x]$.
The classical derivation operator $D$, the identity operator $I$, and the operator $\Lambda$ are, respectively, defined by
\begin{equation}
D(x^{n})=nx^{n-1},\text{ for } n\geq 1\text{\ } and\text{\ } 0 \text{ otherwise }.
\end{equation}
\begin{equation}
I(x^{n})=x^{n}\text{ and } \Lambda(x^{n})=(x+1)^{n}+x^{n},\text{\ }n\in \mathbb{N}.
\end{equation}
Generalized Euler polynomials form a sequence of Appell polynomials \cite{appl},
satisfy the following well-know properties \cite{rom}:
\begin{align}
E_{0}^{(\alpha)}(x)&=1,\\
D(E_{n}^{(\alpha)}(x))&=nE_{n-1}^{(\alpha)}(x),\text{\ }n\geq 1, \\
E_{n}^{(\alpha)}(x+y)&=\sum_{k=0}^{n}\binom{n}{k}
y^{n-k}E_{n}^{(\alpha)}(x), \\
\Lambda(E_{n}^{( \alpha)}(x))&=2E_{n}^{(\alpha -1) }(x), \\
E_{n}^{(\alpha)}(\alpha -x)&=(-1)^{n}E_{n}^{(\alpha)}(x).
\end{align}
\begin{lemma}
For every non-negative $n$ and for all complex numbers $\alpha $ and $\gamma
$, one has
\begin{equation}
\Psi _{\alpha }((x+\gamma)^{n}) =E_{n}^{(
\alpha)}( x+\gamma).
\end{equation}
\end{lemma}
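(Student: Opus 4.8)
The plan is to reduce the claim to a single linear expansion followed by an application of the Appell addition formula. Since $\Psi_{\alpha}$ is given as an automorphism of $\mathbb{C}[x]$, it is in particular $\mathbb{C}$-linear, and by construction it sends each monomial $x^{k}$ to $E_{k}^{(\alpha)}(x)$. I would therefore first expand $(x+\gamma)^{n}$ via the ordinary binomial theorem,
\begin{equation*}
(x+\gamma)^{n}=\sum_{k=0}^{n}\binom{n}{k}\gamma^{n-k}x^{k},
\end{equation*}
treating $\gamma$ as a fixed complex scalar so that each coefficient $\binom{n}{k}\gamma^{n-k}$ is a constant.

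Next I would apply $\Psi_{\alpha}$ to both sides. Using linearity to move $\Psi_{\alpha}$ through the finite sum and past the scalar coefficients, and then invoking the defining rule $\Psi_{\alpha}(x^{k})=E_{k}^{(\alpha)}(x)$, I obtain
\begin{equation*}
\Psi_{\alpha}\big((x+\gamma)^{n}\big)=\sum_{k=0}^{n}\binom{n}{k}\gamma^{n-k}E_{k}^{(\alpha)}(x).
\end{equation*}
The final step is to recognize the right-hand side as exactly the addition formula for generalized Euler polynomials, namely equation (7) with the choice $y=\gamma$, which identifies the sum with $E_{n}^{(\alpha)}(x+\gamma)$ and closes the argument.

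I do not expect a substantial obstacle, since the proof is just a combination of $\mathbb{C}$-linearity with a known identity; the non-negative integer $n$ keeps every sum finite, so no convergence issue arises. The one point that genuinely requires care is the index bookkeeping: the summation variable $k$ produced by the binomial expansion must match the running subscript in the Appell addition formula, so that formula has to be read as $E_{n}^{(\alpha)}(x+\gamma)=\sum_{k=0}^{n}\binom{n}{k}\gamma^{n-k}E_{k}^{(\alpha)}(x)$ (with $E_{k}^{(\alpha)}$, not $E_{n}^{(\alpha)}$, inside the sum) in order for the two expressions to coincide term by term. It is also worth noting explicitly that applying $\Psi_{\alpha}$ commutes with the scalar factors $\gamma^{n-k}$ precisely because $\Psi_{\alpha}$ is a map of $\mathbb{C}[x]$ as a vector space over $\mathbb{C}$, so that $\gamma$ plays the role of a constant rather than of the indeterminate on which $\Psi_{\alpha}$ acts.
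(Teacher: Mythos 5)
Your proof is correct, and it is exactly the argument the paper implicitly intends: the paper states this lemma without any proof, having just listed the addition formula (Eq.~(7)) among the known properties, so binomial expansion plus $\mathbb{C}$-linearity of $\Psi_{\alpha}$ plus Eq.~(7) is the standard route. You also rightly caught that Eq.~(7) as printed contains a typo --- the summand must read $E_{k}^{(\alpha)}(x)$ rather than $E_{n}^{(\alpha)}(x)$ for the identity to make sense --- and your proof handles that correctly.
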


\begin{lemma}
\label{lem2}For every $\alpha \in \mathbb{C}$, one has
\begin{align}
D\circ \Psi _{\alpha} &=\Psi_{\alpha}\circ D,  \label{r7} \\
\Psi _{\alpha}\circ \Lambda &=2\Psi _{\alpha -1}.  \label{r8}
\end{align}
\end{lemma}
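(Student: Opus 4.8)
The plan is to exploit the linearity of all three operators involved: since $\Psi_\alpha$, $D$, and $\Lambda$ are endomorphisms of $\mathbb{C}[x]$, each identity need only be verified on the monomial basis $\{x^n\}_{n\geq 0}$, after which it extends to every polynomial by linearity. Both relations then reduce to short computations on $x^n$ that hinge on the already-established properties of the generalized Euler polynomials.

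For the commutation relation $D\circ\Psi_\alpha=\Psi_\alpha\circ D$, I would evaluate both sides on $x^n$. On the left, $D(\Psi_\alpha(x^n))=D(E_n^{(\alpha)}(x))=nE_{n-1}^{(\alpha)}(x)$ by the Appell differentiation property $D(E_n^{(\alpha)}(x))=nE_{n-1}^{(\alpha)}(x)$. On the right, $\Psi_\alpha(D(x^n))=\Psi_\alpha(nx^{n-1})=n\Psi_\alpha(x^{n-1})=nE_{n-1}^{(\alpha)}(x)$. The two expressions agree for $n\geq 1$, and both vanish for $n=0$, so the identity holds on the basis and hence everywhere.

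For $\Psi_\alpha\circ\Lambda=2\Psi_{\alpha-1}$, I would apply the left side to $x^n$ and use linearity together with the definition of $\Psi_\alpha$ to write $\Psi_\alpha(\Lambda(x^n))=\Psi_\alpha((x+1)^n+x^n)=\Psi_\alpha((x+1)^n)+E_n^{(\alpha)}(x)$. The crucial step is evaluating $\Psi_\alpha((x+1)^n)$: here Lemma 1, taken with $\gamma=1$, gives $\Psi_\alpha((x+1)^n)=E_n^{(\alpha)}(x+1)$. The left side then becomes $E_n^{(\alpha)}(x+1)+E_n^{(\alpha)}(x)$, which is exactly $\Lambda(E_n^{(\alpha)}(x))$ interpreted as the action of $\Lambda$ on the full polynomial $E_n^{(\alpha)}$. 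Invoking the property $\Lambda(E_n^{(\alpha)}(x))=2E_n^{(\alpha-1)}(x)$ yields $2E_n^{(\alpha-1)}(x)=2\Psi_{\alpha-1}(x^n)$, which matches the right side.

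The computations themselves are routine once the reductions are in place; the single point demanding care is the interpretation of $\Psi_\alpha((x+1)^n)$ and of $\Lambda$ acting on $E_n^{(\alpha)}$. The former should be read through Lemma 1 rather than by naive term-by-term expansion, and the latter as the ``shift-plus-identity'' action $P\mapsto P(x+1)+P(x)$ extended linearly from $\Lambda(x^n)=(x+1)^n+x^n$; reconciling these two readings is the main—and essentially the only—obstacle.
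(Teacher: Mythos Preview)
Your proposal is correct and follows essentially the same route as the paper: both proofs verify each identity on the monomial basis $\{x^n\}$, using the Appell differentiation rule for \eqref{r7} and Lemma~1 together with the relation $E_n^{(\alpha)}(x+1)+E_n^{(\alpha)}(x)=2E_n^{(\alpha-1)}(x)$ for \eqref{r8}. The only cosmetic difference is that you explicitly cite Lemma~1 for $\Psi_\alpha((x+1)^n)=E_n^{(\alpha)}(x+1)$, whereas the paper writes this step directly.
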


\begin{proof}[Proof of Eq.~(\ref{r7})]
As $(E_{n}^{(\alpha)}(x))$ is an Appell polynomial, it follows:
\begin{align*}
(D\circ \Psi_{\alpha})(x^{n})&=D(E_{n}^{(\alpha)}(x)) \\
&=nE_{n-1}^{(\alpha)}(x) \\
&=n\Psi_{\alpha}(x^{n-1}) \\
&=\Psi_{\alpha}(nx^{n-1}) \\
&=(\Psi_{\alpha}\circ D)(x^{n}) .
\end{align*}
\end{proof}

\begin{proof}[Proof of Eq.~(\ref{r8})]
It is easy to prove that
\begin{equation*}
E_{n}^{(\alpha)}(x+1) +E_{n}^{(\alpha)}(x) =2E_{n}^{(\alpha -1)}(x) .
\end{equation*}
We deduce that
\begin{align*}
(\Psi _{\alpha }\circ \Lambda)(x^{n})&=\Psi_{\alpha}((x+1)^{n}+x^{n}) \\
&=E_{n}^{(\alpha)}(x+1)+ E_{n}^{(\alpha)}(x) \\
&=2E_{n}^{(\alpha -1)}(x) \\
&=2\Psi _{\alpha -1}(x^{n}) .
\end{align*}
\end{proof}

\section{Main result}\label{sec:main}

The following theorem provides some simplified expressions of $T_{n,\ell
,r}^{(\alpha)}(x,y,z)$ for $x+y+z-\alpha =-s$
where $s$ is non-negative integer.
\begin{theorem}\label{Th1}
For all complex numbers $\alpha$, $\lambda$, and for all non-negative
integers $\ell$, $n$, $r$, and $s$, one has
\begin{align}
&\sum_{k=0}^{n+r}\lambda ^{n+r-k}\binom{n+r}{k}\binom{\ell +k+r}{r}E_{\ell
+k}^{(\alpha)}(x) \notag \\
&+(-1)^{\ell +n+r+s+1}\sum_{k=0}^{\ell +r}\lambda ^{\ell +r-k}%
\binom{\ell +r}{k}\binom{n+k+r}{r}E_{n+k}^{(\alpha)}(\alpha -s-\lambda -x) \notag \\
&=2\Psi_{\alpha -1}\bigr(\frac{D^{r}}{r!}\sum_{k=0}^{s-1}(-1)
^{k}(x+k)^{\ell +r}(x+\lambda +k)^{n+r}\bigr).  \label{le1}
\end{align}
\end{theorem}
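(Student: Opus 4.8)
The plan is to recognize both sums on the left-hand side as the single operator $\Psi_\alpha$ applied to $\tfrac{D^r}{r!}$ of an explicit polynomial, and then to match the result against the right-hand side using the relation $\Psi_\alpha\circ\Lambda = 2\Psi_{\alpha-1}$ from Lemma~\ref{lem2} together with a telescoping identity. Throughout I would write $P(x) := x^{\ell+r}(x+\lambda)^{n+r}$, so that the summand $(x+k)^{\ell+r}(x+\lambda+k)^{n+r}$ on the right is exactly $P(x+k)$.

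First I would treat the first sum. Since $\binom{m}{r}x^{m-r} = \tfrac{D^r}{r!}x^m$, the factor $\binom{\ell+k+r}{r}E_{\ell+k}^{(\alpha)}(x)$ equals $\Psi_\alpha\bigl(\tfrac{D^r}{r!}x^{\ell+k+r}\bigr)$, so by linearity the first sum becomes
\begin{equation*}
\Psi_\alpha\Bigl(\tfrac{D^r}{r!}\sum_{k=0}^{n+r}\lambda^{n+r-k}\binom{n+r}{k}x^{\ell+k+r}\Bigr) = \Psi_\alpha\Bigl(\tfrac{D^r}{r!}\,x^{\ell+r}(x+\lambda)^{n+r}\Bigr) = \Psi_\alpha\Bigl(\tfrac{D^r}{r!}P(x)\Bigr),
\end{equation*}
where the inner sum is collapsed by the binomial theorem.

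For the second sum, the key first move is the reflection property $E_{n}^{(\alpha)}(\alpha-x)=(-1)^{n}E_{n}^{(\alpha)}(x)$: writing $\alpha-s-\lambda-x = \alpha-(s+\lambda+x)$ gives $E_{n+k}^{(\alpha)}(\alpha-s-\lambda-x) = (-1)^{n+k}E_{n+k}^{(\alpha)}(s+\lambda+x)$, which after collecting signs turns the prefactor $(-1)^{\ell+n+r+s+1}(-1)^{n+k}$ into $(-1)^{\ell+r+s+1}(-1)^k$. Then, using Lemma~1 to read $E_{n+k}^{(\alpha)}(s+\lambda+x)$ as $\Psi_\alpha((x+s+\lambda)^{n+k})$ and the same $\tfrac{D^r}{r!}$ device, the binomial theorem collapses the inner sum via $(\lambda-(x+s+\lambda))^{\ell+r} = (-1)^{\ell+r}(x+s)^{\ell+r}$, leaving
\begin{equation*}
(-1)^{s+1}\Psi_\alpha\Bigl(\tfrac{D^r}{r!}(x+s)^{\ell+r}(x+s+\lambda)^{n+r}\Bigr) = (-1)^{s+1}\Psi_\alpha\Bigl(\tfrac{D^r}{r!}P(x+s)\Bigr).
\end{equation*}
Hence the whole left-hand side equals $\Psi_\alpha\bigl(\tfrac{D^r}{r!}[P(x)+(-1)^{s+1}P(x+s)]\bigr)$.

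It remains to show this matches the right-hand side. Using $2\Psi_{\alpha-1} = \Psi_\alpha\circ\Lambda$ from~\eqref{r8}, and that $\Lambda$ commutes with $D$ (since $\Lambda f(x) = f(x+1)+f(x)$ and differentiation commutes with translation), the right-hand side becomes $\Psi_\alpha\bigl(\tfrac{D^r}{r!}\Lambda\bigl(\sum_{k=0}^{s-1}(-1)^k P(x+k)\bigr)\bigr)$. The final step is the telescoping computation
\begin{equation*}
\Lambda\Bigl(\sum_{k=0}^{s-1}(-1)^k P(x+k)\Bigr) = \sum_{k=0}^{s-1}(-1)^k\bigl[P(x+k+1)+P(x+k)\bigr] = P(x)+(-1)^{s+1}P(x+s),
\end{equation*}
after which both sides agree. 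I expect the main obstacle to be bookkeeping: correctly tracking the accumulated signs through the reflection step and verifying that the two binomial collapses and the telescoping all produce the same polynomial $P(x)+(-1)^{s+1}P(x+s)$; the operator identities themselves are immediate from Lemma~1 and Lemma~\ref{lem2}.
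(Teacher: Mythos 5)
Your proposal is correct and is essentially the paper's own proof run in the opposite direction: the paper defines $Q(x)=\sum_{k=0}^{s-1}(-1)^k\frac{D^r}{r!}P(x+k)$, telescopes $Q(x+1)+Q(x)=Q_0(x)-Q_s(x)$, expands $Q_0$ and $Q_s$ binomially, and applies $\Psi_\alpha$ together with the reflection formula and Eq.~(\ref{r8}), which are exactly your ingredients (binomial collapse instead of expansion, reflection used in the reverse direction, the same telescoping, and $\Psi_\alpha\circ\Lambda=2\Psi_{\alpha-1}$). The only cosmetic difference is that you verify LHS and RHS separately against the common polynomial $P(x)+(-1)^{s+1}P(x+s)$, whereas the paper derives both sides from the single quantity $\Psi_\alpha(Q(x+1)+Q(x))$.
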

It is observed that Eq.~(\ref{le1}) can be formulated as follows:
\begin{equation}
T_{n,\ell ,r}^{(\alpha)}(\lambda ,x,\alpha -s-\lambda
-x)=2\sum_{k=0}^{s-1}\sum_{j=0}^{n+r}(-1)^{k}\lambda
^{n+r-j}\binom{n+r}{j}\binom{\ell +r+j}{r}E_{\ell +j}^{(\alpha-1)}(x+k).
\end{equation}

\begin{proof}
Let us consider the polynomial $Q(x)$ defined by
\begin{equation*}
Q(x) =\sum_{k=0}^{s-1}Q_{k}(x)
\end{equation*}
where
\begin{equation*}
Q_{k}(x)=(-1)^{k}\frac{D^{r}}{r!}\bigr((x+k)^{\ell +r}(x+\lambda +k)^{n+r}\bigr).
\end{equation*}
In this context, it is worth noting the equality $Q_{k}(x+1)=-Q_{k+1}(
x)$ and thus
\begin{align*}
Q(x+1)+Q(x) &=\sum_{k=0}^{s-1}Q_{k}(x)-\sum_{k=0}^{s-1}Q_{k+1}(x) \\
&=\sum_{k=0}^{s-1}Q_{k}(x) -\sum_{k=1}^{s}Q_{k}(x) \\
&=Q_{0}(x)-Q_{s}(x).
\end{align*}
One has
\begin{align*}
Q_{0}(x)&=\frac{D^{r}}{r!}\bigr(x^{\ell +r}(x+\lambda)^{n+r}\bigr) \\
&=\frac{D^{r}}{r!}\bigr(\sum_{k=0}^{n+r}\lambda^{n+r-k}\binom{n+r}{k}x^{\ell+k+r}\bigr) \\
&=\sum_{k=0}^{n+r}\lambda ^{n+r-k}\binom{n+r}{k}\binom{\ell +k+r}{r}x^{\ell+k}
\end{align*}
and
\begin{align*}
Q_{s}(x)&=(-1)^{s}\frac{D^{r}}{r!}\bigr(((x+\lambda +s)-\lambda)^{\ell +r}(x+\lambda +s)^{n+r}\bigr) \\
&=(-1)^{s}\frac{D^{r}}{r!}\bigr(\sum_{k=0}^{n+r}(-\lambda)^{\ell +r-k}\binom{\ell +r}{k}(x+\lambda +s)^{n+k+r}\bigr) \\
&=(-1)^{\ell +n+r+s}\sum_{k=0}^{n+r}\lambda ^{\ell +r-k}\binom{\ell +r}{k}\binom{n+k+r}{r}(-1)^{n+k}(x+\lambda +s)^{n+k}
\end{align*}
thus
\begin{align*}
Q(x+1)+Q(x) &=Q_{0}(x)-Q_{s}(x) \\
&=\sum_{k=0}^{n+r}\lambda^{n+r-k}\binom{n+r}{k}\binom{\ell +k+r}{r}x^{\ell+k} \\
&-(-1)^{\ell +n+r+s}\sum_{k=0}^{\ell +r}\lambda ^{\ell +r-k}\binom{\ell +r}{k}\binom{n+k+r}{r}(-1)^{n+k}(x+\lambda+s)^{n+k}.
\end{align*}
So
\begin{align*}
&\Psi_{\alpha}(Q(x+1)+Q(x))\\
&=\sum_{k=0}^{n+r}\lambda^{n+r-k}\binom{n+r}{k}\binom{\ell +k+r}{r}E_{\ell
+k}^{(\alpha)}(x) \\
&-(-1)^{\ell +n+r+s}\sum_{k=0}^{\ell +r}\lambda^{\ell +r-k}
\binom{\ell +r}{k}\binom{n+k+r}{r}(-1)^{n+k}E_{n+k}^{(\alpha)}(x+\lambda +s) \\
&=\sum_{k=0}^{n+r}\lambda^{n+r-k}\binom{n+r}{k}\binom{\ell +k+r}{r}E_{\ell+k}^{(\alpha)}(x) \\
&-(-1)^{\ell +n+r+s}\sum_{k=0}^{\ell +r}\lambda^{\ell +r-k}\binom{\ell +r}{k}\binom{n+k+r}{r}E_{n+k}^{(\alpha)}(\alpha -\lambda -s-x).
\end{align*}
By employing Eq.~(\ref{r8}) in Lemma \ref{lem2}, it can be derived that
\begin{align*}
\Psi _{\alpha }(Q(x+1)+Q(x))&=2\Psi_{\alpha -1}(Q(x)) \\
&=2\Psi _{\alpha -1}(\sum_{k=0}^{s-1}(-1)^{k}\frac{D^{r}
}{r!}((x+k)^{\ell +r}(x+\lambda +k)^{n+r}) \\
&=2\Psi _{\alpha -1}(\frac{D^{r}}{r!}\sum_{k=0}^{s-1}(-1)^{k}(x+k)^{\ell +r}(x+\lambda +k)^{n+r}).
\end{align*}
\end{proof}

\section{Applications}\label{sec:App}

When $s=0$, Theorem \ref{Th1} gives rise to the following corollary:

\begin{corollary}\label{Cor1}
For every complex numbers $\alpha $, one has
\begin{align}
&\sum_{k=0}^{n+r}x^{n+r-k}\binom{n+r}{k}\binom{\ell +k+r}{r}E_{\ell
+k}^{(\alpha)}(y)   \notag \\
&=(-1)^{\ell +n+r}\sum_{k=0}^{\ell +r}x^{\ell +r-k}\binom{\ell
+r}{k}\binom{n+k+r}{r}E_{n+k}^{(\alpha)}(\alpha-x-y) .  \label{r4}
\end{align}
\end{corollary}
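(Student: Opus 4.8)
The plan is to obtain this corollary as the direct specialization $s=0$ of Theorem~\ref{Th1}; no new machinery is needed, and the whole argument amounts to substituting $s=0$ into Equation~(\ref{le1}) and then relabeling the free variables.

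First I would set $s=0$ in Equation~(\ref{le1}). On the right-hand side the summation index $k$ now runs from $0$ to $s-1=-1$, so that sum is empty and the entire right-hand side vanishes. (Equivalently, one may note that $Q(x)=0$ in the proof of the theorem when $s=0$, so $\Psi_{\alpha-1}(Q(x))=0$.) The left-hand side accordingly reduces to
\begin{align*}
&\sum_{k=0}^{n+r}\lambda^{n+r-k}\binom{n+r}{k}\binom{\ell+k+r}{r}E_{\ell+k}^{(\alpha)}(x)\\
&+(-1)^{\ell+n+r+1}\sum_{k=0}^{\ell+r}\lambda^{\ell+r-k}\binom{\ell+r}{k}\binom{n+k+r}{r}E_{n+k}^{(\alpha)}(\alpha-\lambda-x)=0.
\end{align*}

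Next I would transpose the second sum to the right-hand side. As it crosses the equality the factor $(-1)^{\ell+n+r+1}$ becomes $-(-1)^{\ell+n+r+1}=(-1)^{\ell+n+r}$, which is precisely the sign displayed in~(\ref{r4}). Finally I would relabel the dummy variables, replacing the theorem's $\lambda$ by $x$ and its $x$ by $y$, so that the argument $\alpha-\lambda-x$ turns into $\alpha-x-y$. After this renaming the resulting identity is literally Equation~(\ref{r4}), which completes the proof.

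The only step demanding any care—and the closest thing to an obstacle here—is the sign bookkeeping: one must verify that evaluating $(-1)^{\ell+n+r+s+1}$ at $s=0$ and moving the corresponding term across the equality yields $(-1)^{\ell+n+r}$ rather than its negative. Everything else is a mechanical substitution, so the argument is short.
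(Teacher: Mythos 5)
Your proposal is correct and follows exactly the paper's route: the paper derives Corollary~\ref{Cor1} precisely by setting $s=0$ in Theorem~\ref{Th1}, so the right-hand side of Eq.~(\ref{le1}) becomes an empty sum, and your sign bookkeeping and relabeling $\lambda\mapsto x$, $x\mapsto y$ are what the paper implicitly does (it states the corollary without writing out these details).
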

Then, in particular, for $\alpha =1$, $r=0$, and $x=1$, one gets
\begin{equation}
(-1)^{n}\sum_{k=0}^{n+r}\binom{n}{k}E_{\ell +k}(y)
=(-1)^{\ell}\sum_{k=0}^{\ell +r}\binom{\ell}{k}E_{n+k}(-y) ,  \label{r2}
\end{equation}
for $\alpha =1$, $r=1$, and $x=1$, one obtains
\begin{equation}
(-1)^{n}\sum_{k=0}^{n+1}\binom{n+1}{k}(\ell+k+1)E_{\ell +k}(y)+(-1)^{\ell}\sum_{k=0}^{\ell +1}
\binom{\ell +1}{k}(n+k+1)E_{n+k}(-y)=0.  \label{r24}
\end{equation}
Eq.~(\ref{r24}) can be formulated as follows:
\begin{align}
&(-1)^{n}\sum_{k=0}^{n}\binom{n+1}{k}( \ell +k+1)E_{\ell +k}(y)  \notag \\
&+(-1)^{\ell}\sum_{k=0}^{\ell }\binom{\ell +1}{k}(
n+k+1)E_{n+k}(-y)  \notag \\
&=( -1) ^{n+1}2( n+\ell +1+2)(E_{n+\ell
+1}(y)-y^{n+\ell +1}) .  \label{r3}
\end{align}
Wu, Sun, and Pan \cite{wu} found, in 2004, Equations (\ref{r2}) and (\ref{r3}).
For $r=0$ and $\alpha =1$,  Eq.~(\ref{r4}) becomes
\begin{equation}
(-1)^{n}\sum_{k=0}^{n}x^{n-k}\binom{n}{k}E_{\ell +k}(
y) =( -1)^{\ell}\sum_{k=0}^{\ell }x^{\ell -k}\binom{\ell
}{k}E_{n+k}( 1-x-y) .  \label{r5}
\end{equation}
In 2003, Sun \cite[Thm.\ 1.2, Eq.\ (iii)]{sun} obtained Eq.~(\ref{r5}).

\begin{corollary}\label{cor0}Let $k,q,m,n\in \mathbb{N}$, one has
\begin{align}
&(-1)^{m}\sum_{i=0}^{m+q}\alpha^{n+q-i}\binom{m+q}{i}\binom{
n+q+i}{k}E_{n+q+i-k}^{(\alpha)}(x)  \notag \\
&+(-1)^{n+k+1}\sum_{j=0}^{n +q}\alpha ^{n+q-j}\binom{n+q}{j}
\binom{m+q+j}{j}E_{m+q+j-k}^{(\alpha)}(-x) =0. \label{r6}
\end{align}
\end{corollary}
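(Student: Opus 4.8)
The plan is to obtain Eq.~(\ref{r6}) from the case $r=0$ of Corollary~\ref{Cor1} by differentiation. Setting $r=0$ in Eq.~(\ref{r4}) kills both inner binomials and leaves the starting identity
\[
\sum_{i=0}^{n}x^{n-i}\binom{n}{i}E_{\ell+i}^{(\alpha)}(y)=(-1)^{\ell+n}\sum_{j=0}^{\ell}x^{\ell-j}\binom{\ell}{j}E_{n+j}^{(\alpha)}(\alpha-x-y),
\]
valid for all $\alpha\in\mathbb{C}$ and non-negative integers $n,\ell$; this is precisely the general-$\alpha$ form of Eq.~(\ref{r5}).

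First I would apply the operator $\tfrac{1}{k!}D^{k}$, taken with respect to $y$, to both sides. Since $(E_{m}^{(\alpha)})$ is an Appell sequence, the relation $DE_{m}^{(\alpha)}=mE_{m-1}^{(\alpha)}$ iterates to $\tfrac{1}{k!}D^{k}E_{m}^{(\alpha)}(y)=\binom{m}{k}E_{m-k}^{(\alpha)}(y)$; hence on the left each summand acquires a factor $\binom{\ell+i}{k}$ and drops in degree to $\ell+i-k$. On the right the inner argument is $\alpha-x-y$, so the chain rule attaches an extra $(-1)^{k}$ after $k$ differentiations, turning the summand into $(-1)^{k}\binom{n+j}{k}E_{n+j-k}^{(\alpha)}(\alpha-x-y)$ and the global sign into $(-1)^{\ell+n+k}$.

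It then remains to specialize the parameters. I would substitute $\ell\mapsto n+q$ and $n\mapsto m+q$, set the free variable $x\mapsto\alpha$, and evaluate at $y\mapsto x$. The right-hand argument collapses to $\alpha-\alpha-x=-x$, the powers become $\alpha^{m+q-i}$ and $\alpha^{n+q-j}$, the two binomials become $\binom{n+q+i}{k}$ and $\binom{m+q+j}{k}$, and the Euler-polynomial degrees become $n+q+i-k$ and $m+q+j-k$, exactly the shape of Eq.~(\ref{r6}). Transferring both sums to one side, the sign $(-1)^{\ell+n+k}=(-1)^{n+m+2q+k}$ reduces to $(-1)^{n+m+k}$ since $2q$ is even; multiplying the whole relation by $(-1)^{m}$ then drops a further even $2m$, producing the prefactors $(-1)^{m}$ and $(-1)^{n+k+1}$ recorded in the statement.

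The genuinely mechanical parts are the Appell iteration and the final substitution; the main obstacle is the sign and index bookkeeping, namely keeping the chain-rule factor $(-1)^{k}$ separate from the base sign $(-1)^{\ell+n}$ and checking that the single parameter $\alpha$ consistently plays the role of the power variable while simultaneously forcing the evaluation point $-x$. Carrying this out also makes transparent that, for the identity to be homogeneous with the differentiated source, the first exponent should be read as $\alpha^{m+q-i}$ and the second inner binomial as $\binom{m+q+j}{k}$, which is the form dictated by the construction above.
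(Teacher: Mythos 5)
Your proof is correct, but it follows a genuinely different route from the paper. The paper proves Eq.~(\ref{r6}) by pure specialization: it sets $s=0$ and $\lambda=\alpha$ in Theorem~\ref{Th1} (so the right-hand side becomes an empty sum) and then reindexes $n\mapsto m+q-k$, $\ell\mapsto n+q-k$, $r\mapsto k$; the binomials $\binom{n+q+i}{k}$ come from the theorem's built-in parameter $r$, and no differentiation is performed at the corollary level. You instead discard the $r$ parameter (taking $r=0$ in Corollary~\ref{Cor1}) and reconstruct it by applying $\frac{1}{k!}D^{k}$ in $y$, using the Appell iteration $\frac{1}{k!}D^{k}E_{m}^{(\alpha)}=\binom{m}{k}E_{m-k}^{(\alpha)}$ together with the chain-rule factor $(-1)^{k}$, before substituting $\ell\mapsto n+q$, $n\mapsto m+q$, $x\mapsto\alpha$, $y\mapsto x$. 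The two routes are equivalent in substance, since the theorem's $r$ parameter was itself produced by the operator $\frac{D^{r}}{r!}$ inside the proof of Theorem~\ref{Th1}, so your external differentiation recreates what the paper's substitution simply reads off. Your version buys two things: it makes the role of $k$ as a differentiation order transparent, and it does not need the implicit constraint $k\le\min(m+q,\,n+q)$ that the paper's reindexing requires for $m+q-k$ and $n+q-k$ to be admissible (non-negative) parameters of the theorem. Finally, both derivations produce $\alpha^{m+q-i}$ in the first sum and $\binom{m+q+j}{k}$ in the second, so you are right to flag the printed $\alpha^{n+q-i}$ and $\binom{m+q+j}{j}$ in Eq.~(\ref{r6}) as misprints.
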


\begin{proof}
For $s=0$ and $\lambda =\alpha$, Theorem \ref{Th1} leads us to
\begin{align}
&\sum_{k=0}^{n+r}\alpha ^{n+r-k}\binom{n+r}{k}\binom{\ell +k+r}{r}E_{\ell
+k}^{(\alpha)}(x)  \notag \\
&+(-1)^{\ell+n+r+1}\sum_{k=0}^{\ell +r}\alpha ^{\ell +r-k}\binom{\ell +r}{k}\binom{n+k+r
}{r}E_{n+k}^{(\alpha)}(-x) =0 \notag.
\end{align}
Changing $n$ to $m+q-k$, $\ell$ to $n+q-k$, and $r$ to $k$,
one obtains Eq.~(\ref{r6}). This completes the proof of corollary (\ref{cor0}).
\end{proof}

For $\alpha =1$ and $k$ odd, Corollary \ref{cor0} allows us to deduce the following equation, S. Hu and M-S Kim \cite[Thm.\ 1.1, p.\ 3]{hu}
proved this equation.
\begin{align}
&(-1)^{m}\sum_{i=0}^{m+q}\binom{m+q}{i}\binom{n+q+i}{k}
E_{n+q+i-k}(x) \notag\\
&+(-1)^{n}\sum_{j=0}^{\ell +r}\binom{n+q}{j}\binom{m+q+j}{k}
E_{m+q+j-k}(-x) =0.
\end{align}

\begin{corollary}\label{Cor2}
\begin{align}
&(-1)^{\ell }\sum_{k=0}^{n+r}\binom{n+r}{k}\binom{\ell +k+r}{r}
2^{n+r-1-k}E_{\ell +k} \notag \\
&+(-1)^{n+r}\sum_{k=0}^{\ell +r}\binom{\ell +r}{k}\binom{n+k+r
}{r}2^{\ell +r-1-k}E_{n+k} \notag \\
&=\sum_{j=0}^{r}(-1)^{j}\binom{n+r}{j}\binom{\ell +r}{r-j}.
\end{align}
\end{corollary}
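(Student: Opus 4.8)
The plan is to derive Corollary~\ref{Cor2} as the single specialization of Theorem~\ref{Th1} obtained by setting $\alpha=1$, $\lambda=-1$, $x=\tfrac12$, and $s=1$, and then translating the classical Euler numbers $E_{m}$ through $E_{m}=2^{m}E_{m}(\tfrac12)$, where $E_{m}(x)=E_{m}^{(1)}(x)$ is the classical Euler polynomial. With $\alpha=1$ the operator $\Psi_{\alpha-1}=\Psi_{0}$ is the identity of $\mathbb{C}[x]$, because the generating function gives $E_{n}^{(0)}(x)=x^{n}$; hence the right-hand side of Eq.~(\ref{le1}) reduces to the single polynomial $2\,\dfrac{D^{r}}{r!}\bigl(x^{\ell+r}(x-1)^{n+r}\bigr)$ to be evaluated at $x=\tfrac12$. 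Since $\alpha-s-\lambda-x=\tfrac12$ as well, both sums on the left are evaluated at the symmetric point $\tfrac12$, and I will write $U_{1}=\sum_{k}\binom{n+r}{k}\binom{\ell+k+r}{r}E_{\ell+k}(\tfrac12)$ and $U_{2}=\sum_{k}\binom{\ell+r}{k}\binom{n+k+r}{r}E_{n+k}(\tfrac12)$ for the two resulting half-argument sums.

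First I would compute the right-hand side using $\dfrac{D^{r}}{r!}g(x)\big|_{x=a}=[t^{r}]\,g(a+t)$ with $a=\tfrac12$, which yields $[t^{r}](\tfrac12+t)^{\ell+r}(t-\tfrac12)^{n+r}$. Writing $\tfrac12+t=\tfrac12(1+2t)$ and $t-\tfrac12=-\tfrac12(1-2t)$ and extracting the coefficient of $t^{r}$, I expect this to collapse to $(-1)^{n+r}\,2^{\,1-\ell-n-r}\sum_{j=0}^{r}(-1)^{j}\binom{n+r}{j}\binom{\ell+r}{r-j}$, that is, the target binomial sum carrying an explicit power of $2$ and a sign.

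The crux is the left-hand side. After substitution the first sum is $\sum_{k}(-1)^{n+r-k}\binom{n+r}{k}\binom{\ell+k+r}{r}E_{\ell+k}(\tfrac12)$, which at first glance carries an alternating sign not present in the statement. Here I would invoke the reflection formula $E_{n}^{(\alpha)}(\alpha-x)=(-1)^{n}E_{n}^{(\alpha)}(x)$: at $\alpha=1$, $x=\tfrac12$ it gives $E_{n}(\tfrac12)=(-1)^{n}E_{n}(\tfrac12)$, so $E_{n}(\tfrac12)=0$ for odd $n$. Consequently only the terms with $\ell+k$ even survive, and on that support $(-1)^{n+r-k}=(-1)^{\ell+n+r}$ is constant; the alternating sign therefore freezes and the first sum equals $(-1)^{\ell+n+r}U_{1}$. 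The identical argument applied to the second sum (vanishing of $E_{n+k}(\tfrac12)$ for odd $n+k$) turns it into $(-1)^{\ell+n+r}U_{2}$. Feeding these into the left-hand side of Eq.~(\ref{le1}), whose second sum already carries the prefactor $(-1)^{\ell+n+r+s+1}=(-1)^{\ell+n+r}$, one obtains that $(-1)^{\ell+n+r}U_{1}+U_{2}$ equals the right-hand side computed above.

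Finally I would multiply the resulting identity by $(-1)^{n+r}$, turning its left side into $(-1)^{\ell}U_{1}+(-1)^{n+r}U_{2}$ and its right side into $2^{\,1-\ell-n-r}$ times the binomial sum. Restoring the Euler numbers through $2^{n+r-1-k}E_{\ell+k}=2^{\,n+\ell+r-1}E_{\ell+k}(\tfrac12)$ (and symmetrically for the second sum) multiplies the left side by the common factor $2^{\,n+\ell+r-1}$, which cancels the $2^{\,1-\ell-n-r}$ on the right and reproduces Corollary~\ref{Cor2} exactly. The main obstacle is the parity collapse described above: the direct substitution generates the spurious factor $(-1)^{n+r-k}$, and the identity only closes once one recognizes that the odd-indexed Euler numbers vanish, which makes this factor constant on the support of each sum. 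The surrounding steps---the Leibniz expansion of the $r$-th derivative, the coefficient extraction, and the bookkeeping of the powers of $2$ and the two signs---are routine once the substitution and this parity observation are fixed.
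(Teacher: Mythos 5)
Your proof is correct and takes the route the paper intends: Corollary~\ref{Cor2} is precisely the specialization $\alpha=1$, $s=1$, $\lambda=-1$, $x=\tfrac12$ of Theorem~\ref{Th1} (the paper states this corollary without any written proof, and this is the only specialization that yields classical Euler numbers via $E_m=2^{m}E_m(\tfrac12)$ with a nonzero right-hand side). Your key steps all check out: the reduction $\Psi_0=I$, the parity argument that $E_m(\tfrac12)=0$ for odd $m$ freezes the alternating sign $(-1)^{n+r-k}$ into $(-1)^{\ell+n+r}$ on each sum's support, the coefficient extraction giving $(-1)^{n+r}2^{1-\ell-n-r}\sum_{j=0}^{r}(-1)^{j}\binom{n+r}{j}\binom{\ell+r}{r-j}$, and the final rescaling by $(-1)^{n+r}2^{n+\ell+r-1}$.
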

One obtains \cite[Eq.\ (4.3)(ii), p.\ 210]{ago1}.

\begin{corollary} \label{Cor3}
For $r$ even, one has
\begin{equation}
\sum_{k=0}^{n+r}\binom{n+r}{k}\binom{n+r+k}{r}2^{n+r-k}E_{n+k}=(-1)^{n+\frac{
r}{2}}\binom{n+r}{\frac{r}{2}}.
\end{equation}
\end{corollary}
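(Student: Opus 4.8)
The plan is to obtain this identity as the special case $\ell=n$ of Corollary~\ref{Cor2}, exploiting the hypothesis that $r$ is even. First I would set $\ell=n$ throughout the statement of Corollary~\ref{Cor2}. The two sums on its left-hand side then become identical: each equals $\sum_{k=0}^{n+r}\binom{n+r}{k}\binom{n+k+r}{r}2^{n+r-1-k}E_{n+k}$. Their prefactors are $(-1)^{\ell}=(-1)^{n}$ and $(-1)^{n+r}$; because $r$ is even we have $(-1)^{n+r}=(-1)^{n}$, so the two terms add rather than cancel. Collecting them yields $2(-1)^{n}\sum_{k=0}^{n+r}\binom{n+r}{k}\binom{n+k+r}{r}2^{n+r-1-k}E_{n+k}=(-1)^{n}\sum_{k=0}^{n+r}\binom{n+r}{k}\binom{n+k+r}{r}2^{n+r-k}E_{n+k}$, which is exactly $(-1)^{n}$ times the left-hand side of the asserted identity.

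It then remains to evaluate the right-hand side of Corollary~\ref{Cor2} at $\ell=n$, namely the purely combinatorial sum $S:=\sum_{j=0}^{r}(-1)^{j}\binom{n+r}{j}\binom{n+r}{r-j}$. I would recognize $S$ as the coefficient of $x^{r}$ in the product $(1-x)^{n+r}(1+x)^{n+r}=(1-x^{2})^{n+r}$: indeed $\sum_{j}(-1)^{j}\binom{n+r}{j}x^{j}$ and $\sum_{i}\binom{n+r}{i}x^{i}$ are the two factors, and their Cauchy product picks out precisely $S$ as the coefficient of $x^{r}$. Expanding $(1-x^{2})^{n+r}=\sum_{t}(-1)^{t}\binom{n+r}{t}x^{2t}$, the coefficient of $x^{r}$ vanishes when $r$ is odd and equals $(-1)^{r/2}\binom{n+r}{r/2}$ when $r$ is even. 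Since $r$ is even by hypothesis, $S=(-1)^{r/2}\binom{n+r}{r/2}$.

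Combining the two computations gives $(-1)^{n}\sum_{k=0}^{n+r}\binom{n+r}{k}\binom{n+k+r}{r}2^{n+r-k}E_{n+k}=(-1)^{r/2}\binom{n+r}{r/2}$, and multiplying through by $(-1)^{n}$ produces the stated identity with sign $(-1)^{n+r/2}$. The only genuinely nontrivial ingredient is the evaluation of $S$, which the generating-function argument dispatches cleanly; so I expect no serious obstacle beyond ensuring that the parity hypothesis on $r$ is used consistently, since it is needed both to merge the two left-hand sums and to guarantee that the coefficient $S$ is nonzero.
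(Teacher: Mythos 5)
Your proof is correct and follows exactly the route the paper intends: Corollary~\ref{Cor3} is obtained from Corollary~\ref{Cor2} by setting $\ell=n$, using the parity of $r$ to merge the two sums, and evaluating $\sum_{j=0}^{r}(-1)^{j}\binom{n+r}{j}\binom{n+r}{r-j}$ as the coefficient of $x^{r}$ in $(1-x^{2})^{n+r}$. The paper states this corollary without writing out any of these steps, so your argument (in particular the generating-function evaluation of the alternating binomial sum) is precisely the completion the paper leaves implicit.
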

One finds \cite[Eq.\ (4.4)(ii), p.\ 210]{ago1}.

Exploiting the relation $G_{n}(x)=nE_{n-1}(x)$, for $n\geq 1$ where $G_{n}(x)$ is a classical
Genocchi polynomial defined by
\begin{equation}
\sum_{n=0}^{\infty}G_{n}(x)\frac{t^{n}}{n!}=\frac{2t}{e^{t}+1}e^{tx}.
\end{equation}
The classical Genocchi numbers form the sequence of numbers $(G_{n})_{n\geq 1}$, verify
\begin{equation*}
G_{n}=G_{n}(0),\text{\ }n\in \mathbb{N}.
\end{equation*}
Therefore, they are integers and appear in the OEIS ({\it On-Line Encyclopedia of Integer Sequences})
\cite{slo} as \seqnum{A036968}. Such that $E_{n}=2^{n}E_{n}(\frac{1}{2})$ with $(E_{n})_{n\in \mathbb{N}}$ is the sequence of
Euler numbers appear in the OEIS as \seqnum{A000364}. So, the Corollary \ref{Cor2}leads us to the following corollary:

\begin{corollary}\label{Cor4}
\begin{align}
&(-1)^{\ell }\sum_{k=0}^{n+r}\binom{n+r}{k}\binom{\ell +k+r}{r-1}
2^{n+r-1-k}G_{\ell +k+1} \notag \\
&+(-1) ^{n+r}\sum_{k=0}^{\ell +r}\binom{\ell +r}{k}\binom{n+k+r}{r-1}2^{\ell +r-1-k}G_{n+k+1} \notag \\
&=r\sum_{j=0}^{r}(-1)^{j}\binom{n+r}{j}\binom{\ell +r}{r-j}.
\end{align}
\end{corollary}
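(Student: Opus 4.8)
The plan is to deduce Corollary~\ref{Cor4} from Corollary~\ref{Cor2} by a single term-by-term substitution, with no new summation identity required. The two ingredients are the Genocchi--Euler relation in the form $G_{m+1}=(m+1)E_{m}(0)$, obtained by evaluating $G_{n}(x)=nE_{n-1}(x)$ at $x=0$ and $n=m+1$ and recalling $G_{m+1}=G_{m+1}(0)$, together with the elementary binomial identity
\begin{equation*}
\binom{m+r}{r-1}=\frac{r}{m+1}\binom{m+r}{r},
\end{equation*}
which is immediate from $\binom{m+r}{r-1}/\binom{m+r}{r}=\dfrac{r!\,m!}{(r-1)!\,(m+1)!}=\dfrac{r}{m+1}$.

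First I would rewrite the first sum of Corollary~\ref{Cor2} summand by summand, taking $m=\ell+k$. Substituting $E_{\ell+k}(0)=G_{\ell+k+1}/(\ell+k+1)$ and then using the binomial identity to absorb the factor $1/(\ell+k+1)$ yields
\begin{equation*}
\binom{\ell+k+r}{r}E_{\ell+k}(0)=\frac{1}{r}\binom{\ell+k+r}{r-1}G_{\ell+k+1},
\end{equation*}
so that the whole sum picks up an overall factor $1/r$, its inner binomial coefficient drops from $\binom{\ell+k+r}{r}$ to $\binom{\ell+k+r}{r-1}$, and the powers of $2$ together with the sign $(-1)^{\ell}$ are left intact. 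The same computation with $m=n+k$ transforms the second sum in an identical fashion, replacing each summand by $\frac{1}{r}\binom{n+k+r}{r-1}G_{n+k+1}$.

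It then remains only to clear the common factor $1/r$. Multiplying the entire statement of Corollary~\ref{Cor2} by $r$ turns the two left-hand sums into precisely the two sums appearing in Corollary~\ref{Cor4}, while the right-hand side $\sum_{j=0}^{r}(-1)^{j}\binom{n+r}{j}\binom{\ell+r}{r-j}$ is rescaled to $r\sum_{j=0}^{r}(-1)^{j}\binom{n+r}{j}\binom{\ell+r}{r-j}$, which is exactly the asserted right-hand side. This finishes the derivation.

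The step demanding the most care is the bookkeeping of the index shift from $E_{m}$ to $G_{m+1}$ rather than any genuine computation: one must be sure that the Euler quantities entering Corollary~\ref{Cor2} are the values $E_{m}(0)$ of the Euler polynomials at the origin, so that the relation $G_{m+1}=(m+1)E_{m}(0)$ is the correct conversion, and that the substitution is valid uniformly over the ranges $0\le k\le n+r$ and $0\le k\le \ell+r$. Finally, it is worth recording that the statement is vacuous at $r=0$, since then $\binom{\cdot}{r-1}=0$ and the prefactor $r$ on the right vanishes; the content lies in the range $r\ge 1$, for which dividing by $r$ is harmless.
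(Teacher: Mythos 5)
Your route is exactly the one the paper intends --- the paper's entire ``proof'' of Corollary~\ref{Cor4} is the sentence asserting that Corollary~\ref{Cor2} leads to it via $G_n(x)=nE_{n-1}(x)$ --- and your algebra is correct: the identity $\binom{m+r}{r-1}=\frac{r}{m+1}\binom{m+r}{r}$ is right, and multiplying through by $r$ produces the stated right-hand side. However, the caveat you flag at the end is not a formality; it is a genuine gap, and it is precisely where the argument breaks. The paper defines the symbols $E_m$ occurring in Corollary~\ref{Cor2} by $E_m=2^{m}E_m(\tfrac12)$, the Euler numbers of OEIS A000364 (this reading is also what the attribution of Corollary~\ref{Cor2} to Agoh's Eq.~(4.3)(ii) requires). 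For those numbers the conversion $E_m=G_{m+1}/(m+1)$ is simply false: $E_2=-1$ while $G_3/3=0$, and $E_4=5$ while $G_5/5=0$. The relation you invoke, $G_{m+1}=(m+1)E_m(0)$, converts values of Euler \emph{polynomials at $0$}, not Euler numbers, so the term-by-term substitution cannot be applied to Corollary~\ref{Cor2} as the paper states it.

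What your derivation actually uses is the variant of Corollary~\ref{Cor2} in which every $E_m$ is replaced by $E_m(0)$, and that is a genuinely different statement, not a rescaling of the original one: since $2^{n+r-1-k}E_{\ell+k}=2^{n+\ell+r-1}E_{\ell+k}(\tfrac12)$, the Euler-number version collapses to a relation among the values $E_m(\tfrac12)$ with $k$-independent weights, whereas the $E_m(0)$ version retains the weights $2^{-k}$; neither follows from the other by substitution. The $E_m(0)$ variant does happen to be true (e.g., for $n=2$, $\ell=1$, $r=1$ both versions evaluate to $-1$), and once it is granted your computation correctly yields Corollary~\ref{Cor4}; but it needs its own derivation from Theorem~\ref{Th1} or Corollary~\ref{Cor1}, which neither you nor the paper supplies. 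So your proposal faithfully reproduces the paper's reasoning --- including the paper's gap --- and a complete proof must first establish Corollary~\ref{Cor2} with $E_m(0)$ in place of $E_m$ before the Genocchi conversion step you describe can be carried out.
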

By applying Corollary \ref{Cor3}, one obtains the following corollary:

\begin{corollary}\label{Cor5}
\begin{equation}
\sum_{k=0}^{n+r}\binom{n+r}{k}\binom{n+r+k}{r-1}2^{n+r-k}G_{n+k+1}=r(-1)^{n+
\frac{r}{2}}\binom{n+r}{\frac{r}{2}}.
\end{equation}
\end{corollary}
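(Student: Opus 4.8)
The plan is to derive Corollary~\ref{Cor5} directly from Corollary~\ref{Cor3} by the same Euler-to-Genocchi substitution that produced Corollary~\ref{Cor4} from Corollary~\ref{Cor2}, so that no fresh appeal to Theorem~\ref{Th1} is needed. The starting point is the identity of Corollary~\ref{Cor3}, valid for $r$ even,
\[
\sum_{k=0}^{n+r}\binom{n+r}{k}\binom{n+r+k}{r}2^{n+r-k}E_{n+k}=(-1)^{n+\frac{r}{2}}\binom{n+r}{\frac{r}{2}},
\]
and the idea is to rewrite each summand so that the factor $\binom{n+r+k}{r}E_{n+k}$ is replaced by a Genocchi term, after which the whole identity is rescaled.

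First I would record the elementary binomial identity
\[
\binom{n+r+k}{r}=\frac{n+k+1}{r}\binom{n+r+k}{r-1},
\]
which follows at once from $\binom{N}{r}=\frac{N-r+1}{r}\binom{N}{r-1}$ with $N=n+r+k$, since then $N-r+1=n+k+1$. Next I would invoke the Genocchi relation $G_{m}(x)=mE_{m-1}(x)$ in the numerical form $G_{n+k+1}=(n+k+1)E_{n+k}$, exactly as it is used in the passage preceding Corollary~\ref{Cor4}. Combining the two yields the term-by-term replacement
\[
\binom{n+r+k}{r}E_{n+k}=\frac{1}{r}\binom{n+r+k}{r-1}(n+k+1)E_{n+k}=\frac{1}{r}\binom{n+r+k}{r-1}G_{n+k+1}.
\]

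Substituting this into every term of the sum in Corollary~\ref{Cor3} extracts a common factor $1/r$, turning the left-hand side into $\frac1r\sum_{k=0}^{n+r}\binom{n+r}{k}\binom{n+r+k}{r-1}2^{n+r-k}G_{n+k+1}$, while the right-hand side remains $(-1)^{n+r/2}\binom{n+r}{r/2}$; multiplying through by $r$ then gives precisely the asserted equation. I expect the only point requiring genuine care to be the bookkeeping of the two simultaneous index shifts—lowering the lower binomial argument from $r$ to $r-1$ while raising the Genocchi index from $n+k$ to $n+k+1$—and verifying that the factor $(n+k+1)$ produced by the binomial identity is exactly the factor supplied by $G_{n+k+1}=(n+k+1)E_{n+k}$. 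Once this single cancellation is confirmed, the summation and the concluding multiplication by $r$ are immediate, and the parity hypothesis that $r$ is even is simply inherited from Corollary~\ref{Cor3}.
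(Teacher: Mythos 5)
Your route is the one the paper itself gestures at---its entire justification of Corollary~\ref{Cor5} is the sentence ``By applying Corollary~\ref{Cor3}, one obtains the following corollary,'' together with the relation $G_n(x)=nE_{n-1}(x)$ recalled before Corollary~\ref{Cor4}---and your binomial bookkeeping $\binom{n+r+k}{r}=\frac{n+k+1}{r}\binom{n+r+k}{r-1}$ is correct. But the step you yourself single out as the crux is exactly where the argument breaks: the ``numerical form'' $G_{n+k+1}=(n+k+1)E_{n+k}$ is false for the quantities that occur in Corollary~\ref{Cor3}. In that corollary $E_{n+k}$ is the Euler \emph{number}, $E_n=2^nE_n(\tfrac12)$ (OEIS A000364, matching Agoh's identity (4.4)(ii) that the paper cites), whereas evaluating the polynomial identity $G_m(x)=mE_{m-1}(x)$ at $x=0$ gives $G_{m+1}=(m+1)E_m(0)$, where $E_m(0)$ is the Euler \emph{polynomial} at $0$. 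These are different sequences: $E_1=0$ while $G_2/2=-\tfrac12$; $E_2=-1$ while $G_3/3=0$; $E_3=0$ while $G_4/4=\tfrac14$. Concretely, at $n=0$, $r=2$ the summands of Corollary~\ref{Cor3} are $4,\,0,\,-6$, while the summands of $\tfrac1r\,(\text{LHS of Corollary~\ref{Cor5}})$ are $4,\,-6,\,0$: your claimed term-by-term equality fails at $k=1$ and $k=2$, and the two totals agree only because both identities happen to be true, not because one transforms into the other.

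What your substitution actually proves is that Corollary~\ref{Cor5} is equivalent to the variant of Corollary~\ref{Cor3} in which every Euler number $E_{n+k}$ is replaced by $E_{n+k}(0)$. That variant is a true identity, but it is not Corollary~\ref{Cor3} and does not follow from it by any rescaling (the two sums are not proportional term by term), so it needs its own derivation --- for instance from Theorem~\ref{Th1} with $\alpha=1$, $\ell=n$, $s=0$, $\lambda=2$, $x=0$, which for $r$ even reduces the claim (via $E_m(0)+E_m(-1)=2(-1)^m$) to the purely binomial evaluation $\sum_{k=0}^{n+r}(-1)^{k}2^{n+r-k}\binom{n+r}{k}\binom{n+r+k}{r}=(-1)^{r/2}\binom{n+r}{r/2}$. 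Supplying that missing ingredient is the real content of the corollary. Note that the paper's own one-line derivation (and likewise its passage from Corollary~\ref{Cor2} to Corollary~\ref{Cor4}) has the same defect, so you have faithfully reproduced the paper's route --- gap included --- rather than closed it.
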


Alternatively, substituting $q$ with $0$ in Corollary \ref{cor0} yields the following corollary:
\begin{corollary}\label{Cor6}
\begin{equation}
\sum_{k=0}^{n}\binom{n}{k}G_{\ell +k}(x)+(-1)
^{\ell +n}\sum_{k=0}^{\ell }\binom{\ell}{k}G_{n+k}(-x) =0.
\end{equation}
\end{corollary}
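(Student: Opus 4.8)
The plan is to obtain this Genocchi identity from an Euler-polynomial identity already established in the paper, and then to pass from Euler to Genocchi polynomials through the relation $G_{m}(x)=mE_{m-1}(x)$ recorded in the text. Two essentially equivalent routes are available; I would present the one whose sign bookkeeping is most transparent.

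The route I would carry out first uses differentiation. Setting $r=0$, $\alpha=1$, and $x=1$ in Corollary~\ref{Cor1} (then writing $x$ for the remaining polynomial variable $y$), equivalently taking $q=k=0$ and $\alpha=1$ in Corollary~\ref{cor0}, yields the classical Euler identity
\[
\sum_{k=0}^{n}\binom{n}{k}E_{\ell+k}(x)+(-1)^{\ell+n+1}\sum_{k=0}^{\ell}\binom{\ell}{k}E_{n+k}(-x)=0 .
\]
I would then differentiate this equation with respect to $x$. Because $E_{m}$ is the classical Euler polynomial $E_{m}^{(1)}$, the Appell property gives $E_{m}'(x)=D E_{m}(x)=mE_{m-1}(x)=G_{m}(x)$, so differentiation replaces each $E_{\ell+k}(x)$ by $G_{\ell+k}(x)$. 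In the second sum the argument is $-x$, so the chain rule attaches an extra factor $-1$ to every term, turning the prefactor $(-1)^{\ell+n+1}$ into $(-1)^{\ell+n}$. The result is exactly the asserted identity, with no further relabeling needed.

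The alternative route follows the stated hint more literally by keeping $q=0$ and $\alpha=1$ but taking $k=1$ in Corollary~\ref{cor0}. The key observation is that $\binom{n+i}{1}=n+i$, so $\binom{n+i}{1}E_{n+i-1}(x)=(n+i)E_{n+i-1}(x)=G_{n+i}(x)$, and likewise in the second sum; thus $k=1$ is precisely the choice that upgrades each Euler term to a Genocchi term of the right index. After this substitution one multiplies through by $(-1)^{m}$ and renames the free parameters by $m\to n$ and $n\to\ell$ (performed simultaneously, with summation indices renamed to $k$) to reach the stated form. For this to work one reads the second inner binomial of Eq.~(\ref{r6}) as $\binom{m+q+j}{k}$, as its derivation in the proof of Corollary~\ref{cor0} confirms.

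The only delicate point I anticipate is the sign bookkeeping: the Euler identity carries the exponent $\ell+n+1$, whereas the Genocchi identity carries $\ell+n$, so one must account for the missing $+1$. In the differentiation route this is supplied cleanly by the single chain-rule factor $-1$ attached to the $(-x)$ argument; in the substitution route it emerges from $(-1)^{n+k+1}\big|_{k=1}=(-1)^{n}$ after multiplication by $(-1)^{m}$ and the renaming. Beyond this tracking of signs, no genuine computation is required.
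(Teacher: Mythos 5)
Your proposal is correct, and in fact it contains the paper's own proof as your second route. The paper disposes of this corollary in a single line---``substituting $q$ with $0$ in Corollary~\ref{cor0}''---leaving implicit exactly what you make explicit: one must also take $\alpha=1$ and $k=1$ (so that $\binom{n+i}{1}E_{n+i-1}(x)=(n+i)E_{n+i-1}(x)=G_{n+i}(x)$ via $G_m(x)=mE_{m-1}(x)$), multiply through by $(-1)^m$, relabel $m\to n$ and $n\to\ell$, and read the second inner binomial of Eq.~(\ref{r6}) as $\binom{m+q+j}{k}$ rather than the misprinted $\binom{m+q+j}{j}$. Your primary route---differentiating the Euler identity obtained from Corollary~\ref{Cor1} with $r=0$, $\alpha=1$, $x=1$, using $DE_m(x)=mE_{m-1}(x)=G_m(x)$ together with the chain-rule sign coming from the argument $-x$---is genuinely different from anything written in the paper, and arguably cleaner: it bypasses Corollary~\ref{cor0} and its typographical errors entirely, and the discrepancy between the exponent $\ell+n+1$ in the Euler identity and $\ell+n$ in the Genocchi identity is accounted for by a single factor $-1$ instead of by tracking $(-1)^{n+k+1}$ through a substitution and a relabeling. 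Both routes are sound, and your sign bookkeeping checks out in each; the edge case $\ell=k=0$ is also harmless, since $G_0(x)=0$ matches the derivative of $E_0(x)=1$.
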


\section{Conclusion}
In this paper, we present a crucial theorem concerning generalized Euler polynomials. Our method, distinguished by its innovative use of composition operators, It enabled the derivation of a comprehensive identity, encapsulating numerous well-known results as special cases. This approach holds promise for establishing analogous theorems for other specific sequences of Appell polynomials exhibiting similar properties.

\bigskip
\hrule
\bigskip

\noindent 2010 {\it Mathematics Subject Classification}: Primary 11B68;
Secondary 05A10, 11B65.


\begin{thebibliography}{99}

\bibitem{ago1} T. Agoh, Recurrences for Bernoulli and Euler polynomials and
numbers. \textit{Expo. Math}. \textbf{18} (2000), 197--214.

\bibitem{appl} P. Appell, Sur une classe de polyn\^{o}mes,
\textit{Ann. Sci. Ec. Norm. Sup\'{e}r}. \textbf{9} (2) (1880) 119--144.

\bibitem{che} R. Chellal, F. Bencherif, and M. Mehbali, An identity for
generalized Bernoulli polynomials, \textit{J. Integer Sequences} \textbf{23} (2020),\href{https://cs.uwaterloo.ca/journals/JIS/VOL23/Chellal/chellal7.html}{Article 20.11.2}.

\bibitem{hu} Hu. Su and Min-Soo Kim, Identities for the Euler polynomials, $
p$-adic integrals and Witt's formula.  Preprint, 2021. Available at \url{https://arxiv.org/abs/2106.01119}.

\bibitem{rom} S. Roman, \textit{The Umbral Calculus}, Academic Press, New York, NY,
USA, 1984.

\bibitem{slo} N. J. A. Sloane et al., {\it The On-Line Encyclopedia of
Integer Sequences}, published electronically at
  \url{https://oeis.org}, 2020.

\bibitem{sun} Z. W. Sun, Combinatorial identities in dual sequences,
\textit{European J. Combin.} \textbf{24} (2003) 709--718.

\bibitem{wu} K. -J. Wu, Z.-W. Sun, and H. Pan, Some identities for
Bernoulli and Euler polynomials, \textit{Fibonacci Quart}. \textbf{42}
(2004), 295--299.
\end{thebibliography}
\end{document}